\documentclass[11pt]{article}
\title{Completely almost periodic functionals}
\author{\textit{Volker Runde}}
\date{}
\typeout{11pt or 12pt recommended}
\renewcommand{\baselinestretch}{1.2}
\addtolength{\textwidth}{2.3cm} 
\addtolength{\oddsidemargin}{-1cm}
\addtolength{\textheight}{3cm}
\addtolength{\topmargin}{-2cm}
\newcommand{\dated}{\mbox{} \hfill {\small [{\tt \today}]}}
\usepackage{amsmath,amssymb,amsfonts,diagrams}
%
% proofs, remarks and the like
%
\newenvironment{keywords}{\noindent\small {\it Keywords\/}:}{\vskip 4pt}
\newenvironment{classification}{\noindent\small 2000 {\it Mathematics Subject
Classification\/}:}{\vskip 12pt}

%
% some greek characters
%

%
% mathematical expressions
%

\newcommand{\tensor}{\otimes}

\newcommand{\Tensor}{\hat{\otimes}}

\newcommand{\wTensor}{\check{\otimes}}

\newcommand{\cstar}{{C^\ast}}

\newcommand{\id}{{\mathrm{id}}}
\newcommand{\cb}{{\mathrm{cb}}}

\newcommand{\A}{{\mathfrak A}}

\newcommand{\CB}{{\cal CB}}

\newcommand{\VN}{\operatorname{VN}}

% theorems etc.
%
\usepackage{amsthm,enumerate}
\theoremstyle{plain}
\newtheorem{theorem}{Theorem}[section]

\newtheorem{corollary}[theorem]{Corollary}
\newtheorem{proposition}[theorem]{Proposition}
\theoremstyle{definition}
\newtheorem{definition}[theorem]{Definition}
\theoremstyle{remark}

\newtheorem*{rems}{Remarks}
\newtheorem*{exs}{Examples}
\newenvironment{remarks}{\begin{rems}\begin{enumerate}}{\end{enumerate}\end{rems}}
\newenvironment{examples}{\begin{exs}\begin{enumerate}}{\end{enumerate}\end{exs}}
\newenvironment{items}{\begin{enumerate}[\rm (i)]}{\end{enumerate}}

\newcommand{\CK}{\mathcal{CK}}
\newcommand{\CAP}{\mathcal{CAP}}
\begin{document}
\maketitle
\begin{abstract}
Using the notion of complete compactness introduced by H.\ Saar, we define completely almost periodic functionals on completely contractive Banach algebras. We show that, if $(M,\Gamma)$ is a Hopf--von Neumann algebra with $M$ injective, then the space of completely almost periodic functionals on $M_\ast$ is a $\cstar$-subalgebra of $M$.
\end{abstract}
\begin{keywords}
completely compact map; completely almost periodic functional; Hopf--von Neumann algebra.
\end{keywords}
\begin{classification}
Primary 47L25; Secondary 22D25, 43A30, 46L07, 47L50.
\end{classification}
\section*{Introduction}
The almost periodic and weakly almost periodic continuous functions on a locally compact group $G$ form $\cstar$-subalgebras of $L^\infty(G)$, usually denoted by $\mathcal{AP}(G)$ and $\mathcal{WAP}(G)$, respectively: this is fairly elementary to prove and well known (see \cite{Bur} for instance). In a more abstract setting, one can define, for a general Banach algebra $\A$ the spaces $\mathcal{AP}(\A)$ and $\mathcal{WAP}(\A)$ of almost periodic and weakly almost periodic functionals on $\A$; if $\A = L^1(G)$, we have $\mathcal{AP}(\A) = \mathcal{AP}(G)$ and $\mathcal{WAP}(\A) = \mathcal{WAP}(G)$.
\par 
For $\A = A(G)$, Eymard's Fourier algebra (\cite{Eym}), the spaces $\mathcal{AP}(\A)$ and $\mathcal{WAP}(\A)$ are usually denoted by $\mathcal{AP}(\hat{G})$ and $\mathcal{WAP}(\hat{G})$: they were first considered by C.\ F.\ Dunkl and D.\ E.\ Ramirez (\cite{DR}) and further studied by E.\ E.\ Granirer (\cite{Gra} and \cite{Gra2}), A.\ T.-M.\ Lau (\cite{Lau}), and others. Except in a few special cases, e.g., if $G$ is abelian or discrete and amenable, it is unknown whether $\mathcal{AP}(\hat{G})$ and $\mathcal{WAP}(\hat{G})$ are $\cstar$-subalgebras of the group von Neumann algebra $\VN(G)$.
\par
Recently, M.\ Daws considered the almost periodic and weakly almost periodic functionals on the predual of a Hopf--von Neumann algebra with underlying von Neumann algebra $M$ (\cite{Daw}). He proved: \emph{If $M$ is abelian}, then both $\mathcal{AP}(M_\ast)$ and $\mathcal{WAP}(M_\ast)$ are $\cstar$-subalgebras of $M$ (\cite[Theorems 1 and 4]{Daw}). Unfortunately, the demand that $M$ be abelian is crucial for Daws' proofs to work (see \cite{Run} for a discussion).
\par
Over the past two decades, it has become apparent that purely Banach algebraic notions aren't well suited for the study of $A(G)$: one often has to tweak these notions in a way that takes the canonical operator space structure of $A(G)$---as the predual of the group von Neumann algebra---into account. For instance, Banach algebraic amenability of $A(G)$ forces $G$ to be finite-by-abelian (\cite{FR}) whereas $A(G)$ is \emph{operator} amenable if and only if $G$ is amenable (\cite{Rua}), a much more satisfactory result.
\par 
We apply this philosophy to almost periodicity. A functional $\phi$ on a Banach algebra $\A$ is called almost periodic if the maps
\begin{equation} \tag{\mbox{$\ast$}} \label{period}
  \A \to \A^\ast, \quad a \mapsto \left\{ \begin{array}{c} a \cdot \phi \\ \phi \cdot a \end{array} \right.
\end{equation}
are compact. Suppose now that $\A$ is a completely contractive Banach algebra. Then the maps (\ref{period}) are completely bounded. There are various definitions that attempt to fit the notion of a compact operator to a completely bounded context (see \cite{Saa}, \cite{Web}, \cite{Oik}, and \cite{Yew}, for instance). We focus on the definition of a completely compact map from \cite{Saa}, and define $\phi \in \A^\ast$ to be completely almost periodic if the maps (\ref{period}) are completely compact.
\par 
Our main result is that, if $(M,\Gamma)$ is a Hopf--von Neumann algebra such that $M$ is injective, then the completely almost periodic functionals on $M_\ast$ form a $\cstar$-subalgebra of $M$. This applies, in particular, to $A(G)$ in the cases where $G$ is amenable or connected. 
\section{Completely compact maps}
There are various ways to adapt the notion of a compact operator to an operator space setting: operator compactness (\cite{Web} and \cite{Yew}), complete compactness (\cite{Saa}), and Gelfand complete compactness (\cite{Oik}).
\par
The notion of a completely compact map between $\cstar$-algebras was introduced by H.\ Saar in his Diplomarbeit \cite{Saa} under G.\ Wittstock's
supervision. The starting point of his definition is the following observation: if $E$ and $F$ are Banach spaces, and $T \!: E \to F$ is a bounded linear map, then $T$ is compact if and only if, for each $\epsilon > 0$, there is a finite-dimensional subspace $Y_\epsilon$ of $F$ such that $\| Q_{Y_\epsilon} T \| < \epsilon$, where $Q_{Y_\epsilon} \!: F \to F / Y_\epsilon$ is the quotient map. 
\par 
This can be used to define an operator space analog of compactness, namely complete compactness. Saar didn't define complete compactness for maps between general, abstract operator spaces---simply because these objects hadn't been defined yet at that time---, but his definition obviously extends to general operator spaces. (Our reference for operator space theory is \cite{ER}, the notation and terminology of which we adopt.) 
In modern language, Saar's definition reads:
\begin{definition} \label{CKdef}
Let $E$ and $F$ be operator spaces. Then $T \in \CB(E,F)$ is called \emph{completely compact} if, for each $\epsilon > 0$, there is a finite-dimensional subspace $Y_\epsilon$ of $F$ such that $\| Q_{Y_\epsilon} T \|_\cb < \epsilon$, where $Q_{Y_\epsilon} \!: F \to F / Y_\epsilon$ is the quotient map. 
\end{definition}
\begin{remarks}
\item Trivially, completely compact maps are compact.
\item It is obvious that, if $E$ is a Banach space and $F$ is an operator space, then $T \in {\cal B}(E,F) = \CB(\max E, F)$ is completely compact if and only if it is compact.
\item On \cite[pp.\ 32--34]{Saa}, Saar constructs an example of a compact, completely bounded map on ${\cal K}(\ell^2)$ that fails to be completely compact.
\item Complete compactness may not be stable under co-restrictions, i.e., if $T \in \CB(E,F)$ be completely compact, and let $Y$ be a closed subspace of $F$ containing $TE$, then it is not clear why $T$ viewed as an element of $\CB(E,Y)$ should be completely compact.
\item Very recently, complete compactness was put to use for the study of so-called operator multipliers (see \cite{JLTT} and \cite{TT}). 
\end{remarks}
\par 
Given two operator spaces $E$ and $F$, we write $\CK(E,F)$ for the completely compact operators in $\CB(E,F)$.
\par 
The following proposition is essentially \cite[Lemma 1 a) and Lemma 2]{Saa}. (Of course, Saar only considers maps between $\cstar$-algebras, but his proofs carry over more or less verbatim.)
\begin{proposition} \label{Saarprop}
Let $E$ and $F$ be operator spaces. Then:
\begin{items}
\item $\CK(E,F)$ is a closed subspace of $\CB(E,F)$ containing all finite rank operators;
\item if $T \in \CK(E,F)$, $X$ is another operator space, and $R \in \CB(X,E)$, then $TR \in \CK(X,F)$;
\item if $T \in \CK(E,F)$, $Y$ is another operator space, and $S \in \CB(F,Y)$, then $ST \in \CK(E,Y)$.
\end{items}
\end{proposition}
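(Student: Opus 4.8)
The plan is to transcribe, step by step, the classical Banach-space arguments, the only genuinely operator-space ingredient being that quotient maps are complete metric surjections (so that $\| Q_Y \|_\cb \le 1$ for every closed subspace $Y$, and that a map which kills $Y$ factors through $Q_Y$ with no loss in cb-norm).

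For (i), that $\CK(E,F)$ contains the finite rank operators is immediate: if $T$ has finite rank, take $Y_\epsilon := TE$ for every $\epsilon > 0$, so that $Q_{Y_\epsilon} T = 0$. To see that $\CK(E,F)$ is a linear subspace, I would, given $S, T \in \CK(E,F)$ and $\epsilon > 0$, choose finite-dimensional $Y_1, Y_2 \subseteq F$ with $\| Q_{Y_1} S \|_\cb < \epsilon/2$ and $\| Q_{Y_2} T \|_\cb < \epsilon/2$, and set $Y := Y_1 + Y_2$; since $Y \supseteq Y_1, Y_2$, the quotient map $Q_Y$ factors as a complete contraction after $Q_{Y_1}$ (respectively $Q_{Y_2}$), whence $\| Q_Y(S+T) \|_\cb \le \| Q_Y S \|_\cb + \| Q_Y T \|_\cb \le \| Q_{Y_1} S \|_\cb + \| Q_{Y_2} T \|_\cb < \epsilon$. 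Closedness follows from an $\epsilon/2$-argument: if $T_n \to T$ in $\CB(E,F)$ with each $T_n \in \CK(E,F)$, pick $n$ with $\| T - T_n \|_\cb < \epsilon/2$ and then a finite-dimensional $Y$ with $\| Q_Y T_n \|_\cb < \epsilon/2$; since $\| Q_Y \|_\cb \le 1$, we get $\| Q_Y T \|_\cb < \epsilon$.

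Part (ii) is a one-line computation: given $\epsilon > 0$ (and assuming $R \neq 0$), choose a finite-dimensional $Y \subseteq F$ with $\| Q_Y T \|_\cb < \epsilon / \| R \|_\cb$; then $\| Q_Y (TR) \|_\cb = \| (Q_Y T) R \|_\cb \le \| Q_Y T \|_\cb \, \| R \|_\cb < \epsilon$, so $TR \in \CK(X,F)$.

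For (iii), the one step with any content, let $S \in \CB(F,Y)$ (which we may assume nonzero) and $\epsilon > 0$. Choose a finite-dimensional $Z \subseteq F$ with $\| Q_Z T \|_\cb < \epsilon / \| S \|_\cb$, and set $W := S(Z)$, a finite-dimensional subspace of $Y$. Since $S(Z) \subseteq W$, the map $S$ induces a well-defined linear map $\bar S \colon F/Z \to Y/W$ with $\bar S \, Q_Z = Q_W S$; because $Q_Z$ is a complete metric surjection, $\bar S$ is completely bounded with $\| \bar S \|_\cb = \| Q_W S \|_\cb \le \| S \|_\cb$. Therefore $\| Q_W (ST) \|_\cb = \| \bar S \, Q_Z T \|_\cb \le \| \bar S \|_\cb \, \| Q_Z T \|_\cb < \epsilon$, giving $ST \in \CK(E,Y)$. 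The main (indeed the only) obstacle is this last passage, and it amounts to no more than invoking the standard operator-space fact that quotient maps are complete metric surjections; the rest is a routine imitation of the Banach-space proof.
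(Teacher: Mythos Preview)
Your proof is correct and is precisely the kind of argument the paper has in mind: the paper gives no proof of its own but simply cites Saar's Diplomarbeit, noting that his $\cstar$-algebra arguments ``carry over more or less verbatim'' to general operator spaces. What you have written is exactly that verbatim transcription---the classical Banach-space proof, with the single operator-space input being that quotient maps are complete metric surjections (so that $\|Q_Y\|_\cb \le 1$ and factoring through a quotient preserves the cb-norm). There is nothing to add; you have supplied the details the paper omits.
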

\par 
From Schauder's theorem and Saar's characterization of compact operators, it follows immediately that a bounded linear operator $T$ from a Banach space $E$ into a Banach space $F$ is compact if and only if, for each $\epsilon > 0$, there is a closed subspace $X_\epsilon$ of $E$ with finite co-dimension such that $\| T |_{X_\epsilon} \| < \epsilon$ (compare also \cite{Lac}).
\par
Following \cite{Oik}, we define:
\begin{definition} \label{GCKdef}
Let $E$ and $F$ be operator spaces. Then $T \in \CB(E,F)$ is called \emph{Gelfand completely compact} if, for each $\epsilon > 0$, there is a closed subspace $X_\epsilon$ of $E$ with finite co-dimension such that $\| T |_{X_\epsilon} \|_\cb < \epsilon$. 
\end{definition}
\begin{remarks}
\item Obviously, $T \in \CB(E,F)$ is Gelfand completely compact if and only if $T^\ast \in \CB(F^\ast,E^\ast)$ is completely compact (and vice versa).
\item A result analogous to Proposition \ref{Saarprop} holds for Gelfand completely compact maps.
\item Based on results from the unpublished paper \cite{Web}, T.\ Oikhberg gives examples of completely compact maps that fail to be Gelfand completely compact and vice versa (\cite[pp.\ 155-156]{Oik}). Hence, an analog for Schauder's theorem fails for complete compactness.
\end{remarks}
\par
Under certain circumstances, every completely compact map is Gelfand completely compact (\cite[Theorem 3.1]{Oik}). For our purposes, the following is important (see \cite[p.\ 70]{ER} for the notion of an injective operator space):
\begin{proposition} \label{prop2}
Let $E$ and $F$ be operator spaces such that $E^\ast$ and $F^\ast$ are injective. Then the following are equivalent for $T \in \CB(E,F^\ast)$:
\begin{items}
\item $T$ is completely compact;
\item $T$ is Gelfand completely compact;
\item $T$ is a $\cb$-norm limit of finite rank operators.
\end{items}
\end{proposition}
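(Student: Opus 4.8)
The plan is to prove the three equivalences by establishing (iii)$\Rightarrow$(i), (iii)$\Rightarrow$(ii), (i)$\Rightarrow$(iii), and (ii)$\Rightarrow$(iii); the first two implications are immediate from Proposition \ref{Saarprop} (and its Gelfand analog) together with the fact that finite rank maps are both completely compact and Gelfand completely compact, so the real content lies in showing that complete compactness---or Gelfand complete compactness---of a map into $F^\ast$ forces approximability by finite rank maps. I would treat (i)$\Rightarrow$(iii) first and then deduce (ii)$\Rightarrow$(iii) by a duality/adjoint argument, using the first Remark after Definition \ref{GCKdef} and the hypothesis that $E^\ast$ is injective.

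For (i)$\Rightarrow$(iii), let $T \in \CB(E,F^\ast)$ be completely compact and fix $\epsilon > 0$. By Definition \ref{CKdef} there is a finite-dimensional subspace $Y_\epsilon \subseteq F^\ast$ with $\| Q_{Y_\epsilon} T \|_\cb < \epsilon$, where $Q_{Y_\epsilon} \colon F^\ast \to F^\ast / Y_\epsilon$ is the quotient map. The idea is to lift this back: I want a completely bounded projection, or at least a completely bounded map $P_\epsilon \colon F^\ast \to F^\ast$ with finite-dimensional range such that $\| T - P_\epsilon T \|_\cb$ is controlled by $\| Q_{Y_\epsilon} T \|_\cb$. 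Here is where injectivity of $F^\ast$ enters decisively. Since $Y_\epsilon$ is finite-dimensional it is injective as an operator space, so there is a completely contractive projection $\Pi_\epsilon \colon F^\ast \to F^\ast$ onto a slightly enlarged finite-dimensional subspace; more precisely, one uses that a finite-dimensional subspace of an operator space, after perturbation, admits a completely bounded projection with norm close to $1$. Concretely: consider $I - \Pi_\epsilon$, which factors completely boundedly through $F^\ast / Y_\epsilon$ via $Q_{Y_\epsilon}$ (because $\Pi_\epsilon$ agrees with the identity on $Y_\epsilon$, the map $I-\Pi_\epsilon$ kills $Y_\epsilon$ and hence induces $\widetilde{I - \Pi_\epsilon} \colon F^\ast/Y_\epsilon \to F^\ast$ with $(I-\Pi_\epsilon) = \widetilde{I-\Pi_\epsilon}\, Q_{Y_\epsilon}$). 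Therefore $\|(I - \Pi_\epsilon)T\|_\cb \le \|\widetilde{I-\Pi_\epsilon}\|_\cb \, \|Q_{Y_\epsilon}T\|_\cb < C\epsilon$, while $\Pi_\epsilon T$ has finite rank. Letting $\epsilon \to 0$ exhibits $T$ as a $\cb$-norm limit of finite rank operators, which is (iii).

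For (ii)$\Rightarrow$(iii): if $T \in \CB(E,F^\ast)$ is Gelfand completely compact, then by the first Remark following Definition \ref{GCKdef}, $T^\ast \in \CB(F^{\ast\ast}, E^\ast)$ is completely compact. Since $E^\ast$ is injective by hypothesis, the already-proved implication (i)$\Rightarrow$(iii) applied to $T^\ast$ shows $T^\ast$ is a $\cb$-norm limit of finite rank operators. Now one has to descend back to $T$: finite rank approximants $S_n$ of $T^\ast$ need not themselves be adjoints, but their restrictions, composed with the canonical embedding $\kappa_F \colon F \hookrightarrow F^{\ast\ast}$, give finite rank maps $S_n \kappa_F \colon F \to E^\ast$; taking adjoints of these and precomposing with $\kappa_E \colon E \to E^{\ast\ast}$, one produces finite rank maps $E \to F^\ast$ converging in $\cb$-norm to $T$ (using that the adjoint operation is a complete isometry on $\CB$ and that $T^{\ast\ast} \kappa_E = \kappa_{F^\ast} T$). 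This routine weak$^\ast$-bookkeeping finishes the proof.

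The main obstacle I anticipate is the perturbation step in (i)$\Rightarrow$(iii): turning the quotient-norm smallness $\|Q_{Y_\epsilon}T\|_\cb < \epsilon$ into an honest finite rank approximant in $\cb$-norm. The naive hope---that a finite-dimensional subspace $Y_\epsilon$ of an arbitrary operator space $F^\ast$ is the range of a completely bounded projection---is false in general with good constants, so one must either (a) exploit injectivity of $F^\ast$ to build such a projection onto a marginally larger finite-dimensional space at the cost of a constant that stays bounded as $\epsilon\to0$, or (b) appeal to the local structure of injective operator spaces (they are, up to complete isometry, built from the $\mathcal{B}(\mathfrak{H})$'s and possess suitable finite-dimensional completely complemented subspaces). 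Managing the constant $C$ uniformly, so that $C\epsilon \to 0$, is the delicate point; everything else is soft.
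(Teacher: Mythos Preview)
Your overall architecture is right, but you have attacked the ``direct'' implication from the wrong side, and the obstacle you flag at the end is a genuine gap rather than a technicality. Producing a completely bounded projection of $F^\ast$ onto (a small enlargement of) the finite-dimensional subspace $Y_\epsilon$ with a constant $C$ that stays bounded as $\epsilon \to 0$ is not something injectivity of $F^\ast$ hands you: injectivity is an \emph{extension} property for maps \emph{into} $F^\ast$, not a complementation statement about finite-dimensional subspaces sitting inside $F^\ast$. Your suggested routes (a) and (b) are vague precisely at the point where the argument needs to be sharp, so (i)$\Rightarrow$(iii) as written does not go through.

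The paper sidesteps this entirely by reversing the roles of (i) and (ii). It proves (ii)$\Rightarrow$(iii) directly: given $T$ Gelfand completely compact, choose a finite-codimensional $X_\epsilon \subset E$ with $\| T|_{X_\epsilon} \|_\cb < \epsilon$, and now use injectivity of $F^\ast$ exactly as an extension property---extend $T|_{X_\epsilon} \colon X_\epsilon \to F^\ast$ to $\tilde T \in \CB(E,F^\ast)$ with $\| \tilde T \|_\cb = \| T|_{X_\epsilon} \|_\cb < \epsilon$. Then $S := T - \tilde T$ vanishes on the finite-codimensional $X_\epsilon$, hence has finite rank, and $\| T - S \|_\cb = \| \tilde T \|_\cb < \epsilon$. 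No projection onto $Y_\epsilon$ is needed and no constant has to be managed. The implication (i)$\Rightarrow$(iii) is then handled by the duality manoeuvre you already outlined for the other direction: $T^\ast \in \CB(F^{\ast\ast}, E^\ast)$ is Gelfand completely compact with injective codomain $E^\ast$, so by the argument just given it is a $\cb$-limit of finite rank maps, hence so is $T^{\ast\ast}$; composing a finite rank approximant of $T^{\ast\ast}$ with the Dixmier projection $F^{\ast\ast\ast} \to F^\ast$ and restricting to $E$ gives the required finite rank approximant of $T$. In short: swap which implication you prove directly, and the delicate point evaporates.
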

\begin{proof}
Obviously, (iii) implies both (i) and (ii).
\par
(ii) $\Longrightarrow$ (iii): Let $T$ be Gelfand completely compact, and let $\epsilon > 0$. Then there is a closed subspace $X_\epsilon$ of $E$ with finite co-dimension such that $\| T |_{X_\epsilon} \|_\cb < \epsilon$. Due to the injectivity of $F^\ast$, there is $\tilde{T} \in \CB(E,F^\ast)$ such that $\tilde{T} |_{X_\epsilon} = T |_{X_\epsilon}$ and $\| \tilde{T} \|_\cb = \| T |_{X_\epsilon} \|_\cb < \epsilon$. Then $S := T - \tilde{T}$ satisfies $\| S - T \|_\cb < \epsilon$ and vanishes on $X_\epsilon$; since $X_\epsilon$ has finite co-dimension, $S$ must have finite rank..
\par 
(i) $\Longrightarrow$ (iii): As $T^\ast \in \CB(F^{\ast\ast},E^\ast)$ is Gelfand completely compact, it is a $\cb$-norm limit of finite rank operators---by the argument used for (ii) $\Longrightarrow$ (iii)---as is its adjoint $T^{\ast\ast} \in \CB(E^{\ast\ast}, F^{\ast\ast\ast})$. Hence, given $\epsilon > 0$, there is a finite rank operator $S \!: E^{\ast\ast} \to F^{\ast\ast\ast}$ such that $\| S - T^{\ast\ast} \|_\cb < \epsilon$.
Let $Q \!: F^{\ast\ast\ast} \to F^\ast$ be the Dixmier projection, i.e., the adjoint of the canonical embedding of $F$ into $F^{\ast\ast}$. Then $S_0 := QS |_E$ is a finite rank operator from $E$ to $F^\ast$ such that $\| S_0 - T \|_\cb < \epsilon$.
\end{proof}
\section{Completely almost periodic functionals}
If $\A$ is a Banach algebra, then its dual spaces becomes a Banach $\A$-bimodule via
\[
  \langle x, a \cdot \phi \rangle := \langle xa, \phi \rangle \quad\text{and}\quad \langle x , \phi \cdot a \rangle := \langle ax, \phi \rangle
  \qquad (a,x \in \A, \, \phi \in \A^\ast).
\]
For $\phi \in \A^\ast$, define $L_\phi, R_\phi \!: \A \to \A^\ast$ via
\[
  L_\phi a := \phi \cdot a \quad\text{and}\quad R_\phi a := a \cdot \phi \qquad (a \in \A).
\]
A functional $\phi \in \A^\ast$ is commonly called \emph{almost periodic} if $L_\phi$ and $R_\phi$ are compact operators. (As $L_\phi = R_\phi^\ast |_\A$ and $R_\phi = L_\phi^\ast |_\A$, it is sufficient to require that only one of $L_\phi$ and $R_\phi$ be compact.) We denote the space of all almost periodic functionals in $\A^\ast$ by $\mathcal{AP}(\A)$.
\par 
An operator space that is also an algebra such that multiplication is completely contractive, is called a \emph{completely contractive Banach algebra}.
\par 
We define:
\begin{definition} \label{capdef}
Let $\A$ be a completely contractive Banach algebra. We call $\phi \in \A^\ast$ \emph{completely almost periodic} if $L_\phi , R_\phi \in \CK(\A,\A^\ast)$ and denote the collection of completely almost periodic functionals in $\A^\ast$ by $\CAP(\A)$.
\end{definition}
\begin{remarks}
\item Obviously, $\CAP(\A)$ is a closed linear subspace of $\A^\ast$.
\item Since Schauder's theorem is no longer true for complete compactness, we do need the requirement that both $L_\phi$ and $R_\phi$ are completely compact.
\item Since $L_\phi = R_\phi^\ast |_\A$ and $R_\phi = L_\phi^\ast |_\A$, we could have replaced in complete almost periodicity---based on complete compactness---in Definition \ref{capdef} by the analogous notion involving Gelfand complete compactness instead: we would still have obtained the same functionals.
\end{remarks}
\par
We shall now look at a special class of completely contractive Banach algebras which arise naturally in abstract harmonic analysis.
\par 
Recall that a \emph{Hopf--von Neumann algebra} is a pair $(M,\Gamma)$ where $M$ is a von Neumann algebra and $\Gamma \!: M \to M \bar{\tensor} M$ is a \emph{co-multiplication}, i.e., a normal, unital, $^\ast$-homomorphism satisfying
\[
  (\id \tensor \Gamma) \circ \Gamma = (\Gamma \tensor \id) \circ \Gamma.
\]
The co-multiplication induces a product $\ast$ on $M_\ast$ via
\[
  \langle f \ast g, x \rangle := \langle f \tensor g, \Gamma x \rangle \qquad (f,g \in M_\ast, \, x \in M),
\]
which turns $M_\ast$ into a completely contractive Banach algebra.
\begin{examples}
\item Let $G$ be a locally compact group, and define a co-multiplication $\Gamma \!: L^\infty(G) \to L^\infty(G) \bar{\tensor} L^\infty(G)$---noting that $L^\infty(G) \bar{\tensor} L^\infty(G) \cong L^\infty(G \times G)$---by letting
\[
  (\Gamma \phi)(x,y) := \phi (xy) \qquad (x,y \in G, \, \phi \in L^\infty(G)).
\]
Then $L^\infty(G)_\ast = L^1(G)$ with the product induced by $\Gamma$ is just the usual convolution algebra $L^1(G)$. Since the operator space structure on $L^1(G)$ is maximal, $\CAP(L^1(G))$ equals $\mathcal{AP}(L^1(G))$; it consists precisely of the almost periodic continuous functions on $G$ in the classical sense (see \cite{Bur}, for instance) and thus, in particular, is a $\cstar$-subalgebra of $L^\infty(G)$.
\item Let again $G$ be a locally compact group, let $\VN(G)$ be its group von Neumann algebra, i.e., the von Neumann algebra generated by $\lambda(G)$, where $\lambda$ is the left regular representation of $G$ on $L^2(G)$, and let $\Gamma \!: \VN(G) \to \VN(G) \bar{\tensor} \VN(G)$ be given by 
\[
  \Gamma \lambda(x) = \lambda(x) \tensor \lambda(x) \qquad (x \in G).
\]
Then $\VN(G)_\ast$ is Eymard's Fourier algebra $A(G)$ (\cite{Eym}), and the product induced by $\Gamma$ is pointwise multiplication. The space $\mathcal{AP}(A(G))$---often denoted by $\mathcal{AP}(\hat{G})$---was first considered in \cite{DR} and further studied in \cite{Gra} and \cite{Lau}, for instance.
\end{examples}
\par 
Except in a few special cases, e.g., if $G$ is abelian or discrete and amenable (as a consequence of \cite[Theorem 12]{Gra} and \cite[Proposition 2]{Gra2}), is unknown whether or not $\mathcal{AP}(\hat{G})$ is a $\cstar$-subalgebra of $\VN(G)$.
\par
The picture changes once we replace almost periodicity with complete almost periodicity:
\begin{theorem} \label{mainthm}
Let $(M,\Gamma)$ be a Hopf--von Neumann algebra such that $M$ is injective. Then $\CAP(M_\ast)$ is a $\cstar$-subalgebra of $M$.
\end{theorem}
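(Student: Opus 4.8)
The plan is to identify $\CAP(M_\ast)$ explicitly as the preimage under $\Gamma$ of the spatial $\cstar$-tensor product $M \tensor_{\min} M$ sitting inside $M \bar{\tensor} M$, and then observe that such a preimage is automatically a $\cstar$-subalgebra of $M$ because $\Gamma$ is a $^\ast$-homomorphism.

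First I would set up the standard operator space identification $\CB(M_\ast, M) = \CB(M_\ast, (M_\ast)^\ast) \cong (M_\ast \ptensor M_\ast)^\ast \cong M \bar{\tensor} M$, completely isometrically, the last step by the Effros--Ruan duality $(M \bar{\tensor} M)_\ast = M_\ast \ptensor M_\ast$. Unravelling the definitions of the module actions on $\A^\ast$ and of the product $\ast$ on $M_\ast$, one checks that under this identification $L_\phi$ corresponds to $\Gamma \phi$ and $R_\phi$ to $\Sigma \Gamma \phi$, where $\Sigma$ is the flip on $M \bar{\tensor} M$. The finite rank maps in $\CB(M_\ast, M)$ correspond precisely to the algebraic tensor product $M \tensor M$; and since the operator space structure on the von Neumann algebra $M \bar{\tensor} M$ is the one coming from its $\cstar$-algebra structure (so that $M \tensor_{\min} M$ embeds isometrically), the $\cb$-norm closure of the finite rank maps is exactly $M \tensor_{\min} M$.

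Next I would apply Proposition \ref{prop2} with $E = M_\ast$ and $F^\ast = M$: both $E^\ast = M$ and $F^\ast = M$ are injective by hypothesis, so a map in $\CB(M_\ast, M)$ is completely compact if and only if it is a $\cb$-norm limit of finite rank maps. Combining this with the preceding step, $\phi \in \CAP(M_\ast)$ if and only if $\Gamma \phi \in M \tensor_{\min} M$ (note that $\Gamma\phi \in M \tensor_{\min}M$ forces $\Sigma\Gamma\phi \in M\tensor_{\min}M$ too, so the conditions on $L_\phi$ and $R_\phi$ coincide here). Hence $\CAP(M_\ast) = \Gamma^{-1}(M \tensor_{\min} M)$. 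As $\Gamma \!: M \to M \bar{\tensor} M$ is a normal unital $^\ast$-homomorphism and $M \tensor_{\min} M$ is a $\cstar$-subalgebra of $M \bar{\tensor} M$, the preimage $\Gamma^{-1}(M \tensor_{\min} M)$ is a $\cstar$-subalgebra of $M$, which is the assertion.

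The main obstacle lies entirely in the first step: getting the operator space bookkeeping exactly right, in particular verifying that the $\cb$-norm closure of the finite rank operators in $\CB(M_\ast,M)$ equals $M \tensor_{\min} M$ --- this rests on the coincidence of the operator space and $\cstar$-algebra structures on $M \bar{\tensor} M$ and on the isometric embedding of the spatial tensor product. Everything past the identification $\CAP(M_\ast) = \Gamma^{-1}(M \tensor_{\min} M)$ is purely formal. It is worth stressing that injectivity of $M$ is used only through Proposition \ref{prop2}; it is precisely what lets one trade the intractable hypothesis ``completely compact'' for the tractable ``$\cb$-limit of finite rank operators'', equivalently ``lands in $M \tensor_{\min} M$ after $\Gamma$'' --- and co-associativity of $\Gamma$ plays no role in this particular argument.
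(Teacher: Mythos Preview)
Your proposal is correct and follows essentially the same route as the paper: identify $\CB(M_\ast,M)\cong M\bar{\tensor}M$, use Proposition~\ref{prop2} (this is where injectivity enters) to equate complete compactness with being a $\cb$-limit of finite ranks, recognize the latter as landing in $M\wTensor M$, and conclude $\CAP(M_\ast)=\Gamma^{-1}(M\wTensor M)$. Your explicit mention of the flip for $R_\phi$ and your remark that co-associativity is irrelevant are accurate refinements of what the paper leaves implicit.
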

\begin{proof}
Let $x \in M$. Then, by Proposition \ref{prop2}, $L_x$ is completely compact if and only if it is a $\cb$-norm limit of finite rank operators.
In view of the completely isometric identifications (\cite[Corollary 7.1.5]{ER} and \cite[Theorem 7.2.4]{ER})
\[
  M \bar{\tensor} M \cong (M_\ast \Tensor M_\ast)^\ast \cong \CB(M_\ast,M)
\]
and of \cite[Proposition 8.1.2]{ER}, this means that $L_x$ is completely compact if and only if $\Gamma x \in M \wTensor M$. A similar assertion holds for $R_x$.
\par 
All in all, we have that
\[
  \CAP(M_\ast) = \{ x \in M : \Gamma x \in M \wTensor M \}.
\]
Since $M \wTensor M$ is nothing but the spatial $\cstar$-tensor product of $M$ with itselt and thus $\cstar$-subalgebra of $M \bar{\tensor} M$, and since $\Gamma$ is a $^\ast$-homomorphism, this proves the claim.
\end{proof}
\par 
If $G$ is an amenable or connected locally compact group, then $\VN(G)$ is well known to be injective. Writing $\CAP(\hat{G})$ for $\CAP(A(G))$, we thus have:
\begin{corollary}
Let $G$ be an amenable or connected locally compact group. Then $\CAP(\hat{G})$ is a $\cstar$-subalgebra of $\VN(G)$.
\end{corollary}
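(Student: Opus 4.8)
The plan is to characterise $\CAP(M_\ast)$ as an explicit subset of $M$ and then observe that this subset is manifestly a $\cstar$-subalgebra. Fix $x \in M$ and consider $L_x \colon M_\ast \to M$. Since $M$ is injective and $M = (M_\ast)^\ast$, Proposition \ref{prop2} applies (with $E = M_\ast$, $F^\ast = M$, and $F = M_\ast$ whose dual is injective), so $L_x$ is completely compact if and only if it is a $\cb$-norm limit of finite rank operators. First I would translate this approximation condition into a condition on $\Gamma x$ under the completely isometric identifications $M \bar{\tensor} M \cong \CB(M_\ast, M)$ from \cite[Corollary 7.1.5]{ER} and \cite[Theorem 7.2.4]{ER}: under this identification, the operator corresponding to an element of $M \bar{\tensor} M$ is $L_x$ precisely when that element is $\Gamma x$, by the very definition of the induced product on $M_\ast$. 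The point is then that, inside $\CB(M_\ast, M) \cong M \bar{\tensor} M$, the $\cb$-norm closure of the finite rank operators is exactly the injective (spatial) $\cstar$-tensor product $M \wtensor M$; this is where \cite[Proposition 8.1.2]{ER} enters, identifying $M \wtensor M$ with the closure of $M \tensor M$ inside the appropriate space of completely bounded maps.

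Granting this, the identical argument applied to $R_x$ shows $R_x$ is completely compact if and only if $\Gamma x \in M \wtensor M$ as well (one uses the flip, or simply the symmetric form of the identification), so the two conditions coincide and we obtain
\[
  \CAP(M_\ast) = \{ x \in M : \Gamma x \in M \wtensor M \}.
\]
Now $M \wtensor M$, being the spatial $\cstar$-tensor product of $M$ with itself, is a $\cstar$-subalgebra of $M \bar{\tensor} M$, hence closed under the involution and multiplication inherited from $M \bar{\tensor} M$. Since $\Gamma \colon M \to M \bar{\tensor} M$ is a unital $^\ast$-homomorphism, the preimage $\Gamma^{-1}(M \wtensor M)$ is a $\cstar$-subalgebra of $M$: it is a subspace, it contains $1$ because $\Gamma 1 = 1 \tensor 1 \in M \wtensor M$, it is closed under $x \mapsto x^\ast$ because $\Gamma(x^\ast) = (\Gamma x)^\ast$, it is closed under products because $\Gamma(xy) = (\Gamma x)(\Gamma y)$, and it is norm-closed because $\Gamma$ is (completely) isometric onto its range, or at any rate continuous, and $M \wtensor M$ is closed. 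This finishes the proof.

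I expect the main obstacle to be the careful bookkeeping in the first paragraph, specifically verifying that the completely isometric identification $M \bar{\tensor} M \cong \CB(M_\ast, M)$ sends $\Gamma x$ to $L_x$ (and not, say, to $R_x$ or a flipped version), and — more delicately — that under this identification the $\cb$-closure of the finite rank maps is precisely $M \wtensor M$ rather than some a priori larger space. The finite rank maps $M_\ast \to M$ correspond to the algebraic tensor product $M \tensor M$ sitting inside $M \bar{\tensor} M$, and one must check that the $\cb$-norm on $\CB(M_\ast, M)$ restricts on $M \tensor M$ to the spatial $\cstar$-norm; this is exactly the content of the operator space realisation of the minimal tensor product, \cite[Proposition 8.1.2]{ER}. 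Once that identification is in place, the rest is the routine observation that a $^\ast$-homomorphism pulls $\cstar$-subalgebras back to $\cstar$-subalgebras, together with the harmless remark that the $L_x$ and $R_x$ conditions are symmetric and therefore redundant.
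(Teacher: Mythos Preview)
Your argument is essentially the paper's proof of Theorem~\ref{mainthm} verbatim: the same use of Proposition~\ref{prop2}, the same identifications $M \bar{\tensor} M \cong \CB(M_\ast,M)$ via \cite[Corollary 7.1.5 and Theorem 7.2.4]{ER}, the same appeal to \cite[Proposition 8.1.2]{ER} to recognise the $\cb$-closure of the finite rank maps as $M \wTensor M$, and the same conclusion $\CAP(M_\ast) = \Gamma^{-1}(M \wTensor M)$.

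The only omission is that you never actually deduce the Corollary from this: you work throughout with a generic injective $M$ and never say why the hypothesis ``$G$ amenable or connected'' puts you in that situation. The paper's deduction of the Corollary is a single sentence preceding its statement: for such $G$ the group von Neumann algebra $\VN(G)$ is well known to be injective, so Theorem~\ref{mainthm} applies with $M = \VN(G)$ and $M_\ast = A(G)$. Add that sentence and your proof is complete and identical in approach to the paper's.
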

\begin{remarks}
\item If $G$ is abelian---or, more generally, finite-by-abelian---, then the canonical operator space structure on $A(G)$ is equivalent to $\max A(G)$, so that $\CAP(\hat{G}) = \mathcal{AP}(\hat{G})$. 
\item Suppose that $G$ is discrete and amenable. Then $\mathcal{AP}(\hat{G})$ equals $C^\ast_r(G)$, the (reduced) group $\cstar$-algebra of $G$ by \cite[Proposition 3(b)]{Gra2}. Since $\Gamma C^\ast_r(G) \subset C^\ast_r(G) \wTensor C^\ast_r(G) \subset \VN(G) \wTensor \VN(G)$, an inspection of the proof of Theorem \ref{mainthm} shows that therefore $\mathcal{AP}(\hat{G}) \subset \CAP(\hat{G})$ and thus, trivially, $\mathcal{AP}(\hat{G}) = \CAP(\hat{G})$.
\end{remarks}
\renewcommand{\baselinestretch}{1.0}
\renewcommand{\baselinestretch}{1.2}
\dated
\vfill
\begin{tabbing}
\textit{Author's address}: \= Department of Mathematical and Statistical Sciences \\
\> University of Alberta \\
\> Edmonton, Alberta \\
\> Canada T6G 2G1 \\[\medskipamount]
\textit{E-mail}: \> \texttt{vrunde@ualberta.ca} \\[\medskipamount]
\textit{URL}: \> \texttt{http://www.math.ualberta.ca/$^\sim$runde/}
\end{tabbing}           
\end{document}